\newtheorem{theorem}{Theorem}
\newtheorem{lemma}[theorem]{Lemma}
\theoremstyle{definition}
\theoremstyle{remark}
\numberwithin{equation}{section}
\numberwithin{theorem}{section}
\newcommand{\intav}[1]{\mathchoice {\mathop{\vrule width 6pt height 3 pt depth  -2.5pt
\kern -8pt \intop}\nolimits_{\kern -6pt#1}} {\mathop{\vrule width
5pt height 3  pt depth -2.6pt \kern -6pt \intop}\nolimits_{#1}}
{\mathop{\vrule width 5pt height 3 pt depth -2.6pt \kern -6pt
\intop}\nolimits_{#1}} {\mathop{\vrule width 5pt height 3 pt depth
-2.6pt \kern -6pt \intop}\nolimits_{#1}}}
\newcommand{\intavl}[1]{\mathchoice {\mathop{\vrule width 6pt height 3 pt depth  -2.5pt
\kern -8pt \intop}\limits_{\kern -6pt#1}} {\mathop{\vrule width 5pt
height 3  pt depth -2.6pt \kern -6pt \intop}\nolimits_{#1}}
{\mathop{\vrule width 5pt height 3 pt depth -2.6pt \kern -6pt
\intop}\nolimits_{#1}} {\mathop{\vrule width 5pt height 3 pt depth
-2.6pt \kern -6pt \intop}\nolimits_{#1}}}
\newcommand{\mc}{\mathcal}
\newcommand{\R}{\mathbb{R}}
\newcommand{\Z}{\mathbb{Z}}
\newcommand{\T}{\mc{T}}
\newcommand{\p}{{\bf p}}
\renewcommand{\P}[1]{{\mathbb{P}}\left[{#1}\right]}
\begin{document}

\title[Annihilation and coalescence on binary trees]{Annihilation and coalescence on binary trees}

\author[Itai Benjamini]{Itai Benjamini}
\address{Weizmann Institute of Science, Faculty of Mathematics and Computer Science, POB 26, 76100, Rehovot, Israel.}
\email{itai.benjamini@weizmann.ac.il}

\author[Yuri Lima]{Yuri Lima}
\address{Department of Mathematics, University of Maryland, College Park, MD 20742, USA.}
\email{yurilima@gmail.com}

\subjclass[2010]{Primary: 37E25, 60K35. Secondary: 37C70.}

\date{\today}

\keywords{binary tree, annihilation, coalescence, recursion on trees}

\begin{abstract}
An infection spreads in a binary tree $\T_n$ of height $n$ as follows: initially, each leaf is either
infected by one of $k$ states or it is not infected at all. The infection state of each leaf is
independently distributed according
to a probability vector $\p=(\p_1,\ldots,\p_{k+1})$. The remaining nodes become
infected or not via annihilation and coalescence: nodes whose two children have the same state
(infected or not) are infected (or not) by this state; nodes whose two children have different states are
not infected; nodes whose only one of the children is infected are infected by this state.
In this note we characterize, for every $\p$, the limiting distribution at the root node of
$\T_n$ as $n$ goes to infinity.

We also consider a variant of the model when $k=2$ and a mutation can happen, with a fixed probability $q$,
at each infection step. We characterize, in terms of $\p$ and $q$, the limiting distribution at the root node of
$\T_n$ as $n$ goes to infinity.

The distribution at the root node is driven by a
dynamical system, and the proofs rely on the analysis of this dynamics.
\end{abstract}

\keywords{binary tree; annihilation; coalescence; recursion on trees.}

\subjclass[2010]{Primary: 37E25, 60K35. Secondary: 37C70.}

\maketitle

\section{Introduction and statement of results}

Let $\Delta_k$ denote the $k$-dimensional simplex
\begin{align*}
\Delta_k=\left\{\p=(\p_1,\ldots,\p_{k+1})\in\mathbb R^{k+1}:\sum_{i=1}^{k+1}\p_i=1\text{ and }\p_i>0,\forall\,i\right\},
\end{align*}
let $\T_n$ denote the binary tree of height $n$, and fix $\p\in\Delta_k$. The nodes of $\T_n$ are
infected by one of $k$ states $\{1,2,\ldots,k\}$ or not infected as follows.\\

\noindent{\bf Step 1.} Each leaf is infected i.i.d. according to $\p$:
\begin{align}\label{definition of process 1}
\P{\text{leaf is infected by }i}=\p_i\ ,\ \P{\text{leaf is not infected}}=\p_{k+1}.
\end{align}
\noindent{\bf Step 2.} A node adjacent to two leaves is infected or not according to the rules:
\begin{enumerate}[(R1)]
\item[(R1)] if both leaves are not infected, then the node is not infected.
\item[(R2)] if both leaves are infected by the same state, then the node is infected by it,
\item[(R3)] if both leaves are infected by different states, then the node is not infected,
\item[(R4)] if only one of the leaves is infected, then the node is infected by it.\\
\end{enumerate}
\noindent{\bf Step 3.} Repeat Step 2 to each level of $\T_n$.\\

In other words, there is {\bf coalescence} of infection if the states agree and {\bf annihilation}
if they disagree. Let $\p(n)\in\Delta_k$ denote the distribution of the state in the root node of $\T_n$, i.e.
\begin{align*}
\P{\text{root node is infected by } i}=\p_i(n)\ ,\ \P{\text{root node is not infected}}=\p_{k+1}(n).
\end{align*}
In this note, we characterize the limiting behavior of $\p(n)$ as $n$ goes to infinity.

\begin{theorem}\label{main thm 1}
For any $\p\in\Delta_k$, $\p(n)$ converges. Assume that $\p_1\ge\cdots\ge\p_k$.
\begin{enumerate}[(a)]
\item If $\p_1=\cdots=\p_k$, then $\p(n)$ converges to $(\frac{1}{2k-1},\ldots,\frac{1}{2k-1},\frac{k-1}{2k-1})$.
\item If $\p_1=\cdots=\p_i>\p_{i+1}$ for some $i\in\{1,\ldots,k-1\}$, then $\p(n)$ converges to
$(\frac{1}{2i-1},\ldots,\frac{1}{2i-1},0,\ldots,0,\frac{i-1}{2i-1})$, where the entry $\frac{1}{2i-1}$
repeats $i$ times.
\end{enumerate}
\end{theorem}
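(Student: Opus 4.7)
The plan is to analyze the deterministic recursion on $\Delta_k$ induced by the tree rules. Since the two children of an internal node are i.i.d.\ with the distribution at the previous level, a direct computation gives, for $i\le k$,
\[
\p_i(n+1)=\p_i(n)\bigl(\p_i(n)+2\p_{k+1}(n)\bigr),
\]
and $\p_{k+1}(n+1)=1-\sum_{i\le k}\p_i(n+1)$. Subtracting yields the fundamental identity
\[
\p_i(n+1)-\p_j(n+1)=\bigl(\p_i(n)-\p_j(n)\bigr)\bigl(\p_i(n)+\p_j(n)+2\p_{k+1}(n)\bigr)\qquad(i,j\le k),
\]
so that the ordering $\p_1(n)\ge\cdots\ge\p_k(n)$ and any initial equalities $\p_i(0)=\p_j(0)$ are preserved for all $n$.

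The heart of the proof is the claim: if $\p_j(0)<\p_1(0)$, then $\p_j(n)\to 0$. Set $r(n):=\p_1(n)/\p_j(n)>1$; the recursion gives $r(n+1)=r(n)\cdot(\p_1+2\p_{k+1})/(\p_j+2\p_{k+1})\ge r(n)$, so $r(n)\uparrow r^*\in(1,\infty]$. Suppose for contradiction $r^*<\infty$; then the multiplier tends to $1$, i.e.\ $(\p_1-\p_j)/(\p_j+2\p_{k+1})\to 0$, and since $\p_j+2\p_{k+1}\le 2$ this forces $\p_1(n)-\p_j(n)\to 0$. Combined with $r\to r^*>1$ and $\p_j=(\p_1-\p_j)/(r-1)$, we obtain $\p_j\to 0$, and hence $\p_1=r\p_j\to 0$. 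By monotonicity $\p_l(n)\le\p_1(n)$, so every $\p_l\to 0$ for $l\le k$ and $\p_{k+1}\to 1$; but then $\p_1(n+1)/\p_1(n)=\p_1+2\p_{k+1}\to 2$, contradicting $\p_1\to 0$. Thus $r^*=\infty$ and $\p_j(n)\le\p_1(n)/r(n)\to 0$.

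Let $i$ be the number of coordinates achieving $\max_l\p_l(0)$; by preservation of equalities, $\p_1(n)=\cdots=\p_i(n)=:a(n)$, and with $\beta(n):=\sum_{l>i}\p_l(n)\to 0$ the reduced recursion becomes
\[
a(n+1)=a(n)\bigl(2-(2i-1)a(n)\bigr)-2a(n)\beta(n)=f(a(n))-2a(n)\beta(n),
\]
where $f(x):=x(2-(2i-1)x)$ has fixed points $0$ (with $f'(0)=2$, repelling) and $a^*:=1/(2i-1)$ (with $f'(a^*)=0$, super-stable), maps $[0,2/(2i-1)]\supseteq[0,1/i]$ into $[0,a^*]$, and satisfies $f(x)\ge x$ on $[0,a^*]$. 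The same regeneration argument excludes $a(n)\to 0$. Since $a(n+1)\le a^*$ for $n\ge 1$ and
\[
a(n+1)-a(n)=a(n)\bigl[(1-(2i-1)a(n))-2\beta(n)\bigr]
\]
is positive on $\{a(n)\le a^*-\epsilon\}$ once $\beta(n)$ is small enough, one concludes $\liminf a(n)\ge a^*-\epsilon$ for every $\epsilon>0$, whence $a(n)\to a^*=1/(2i-1)$. Consequently $\p_{k+1}(n)\to(i-1)/(2i-1)$, matching part (a) (case $i=k$) and part (b).

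The main obstacle is the ``regeneration'' input $\p_1(n)\not\to 0$, used both to exclude $r^*<\infty$ and to exclude $a(n)\to 0$: monotonicity of the ratios $\p_1/\p_j$ provides convergence in $[1,\infty]$ essentially for free, but only the nonlinear feedback that all states collapsing forces $\p_{k+1}\to 1$ and hence doubles $\p_1$ at the next step rules out the degenerate scenarios. Once this is in place, the remaining step is the asymptotically autonomous one-dimensional dynamics of $f$ with vanishing perturbation $\beta$, for which super-stability at $a^*$ makes convergence essentially routine.
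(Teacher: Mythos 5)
Your proposal is correct, and for the main step it takes a genuinely different route from the paper. The paper's engine is an algebraic lemma (a sum-of-squares identity showing $F_{k+1}(x)\ge F_2(x)+\cdots+F_k(x)$ whenever $x_1$ is maximal), which yields that the multiplier $\p_1(n)+\p_{i+1}(n)+2\p_{k+1}(n)$ exceeds $1$; hence the difference $\p_1(n)-\p_{i+1}(n)$ increases, is bounded, converges, forces the multiplier to $1$, and a second use of the same lemma (via the decomposition $y(n)\ge z(n)\ge 0$ with $\p_{i+1}(n)=y(n)-z(n)$) extracts $\p_{i+1}(n)\to 0$; the limit is then identified by passing to the limit in the one-dimensional recursion. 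You instead track the ratio $\p_1(n)/\p_j(n)$, which is monotone for free, and rule out a finite limit by the ``regeneration'' contradiction (if all infected states vanish then $\p_{k+1}\to 1$ and the recursion doubles $\p_1$); this completely bypasses the paper's key algebraic lemma, at the price of an argument by contradiction and a slightly heavier endgame (the asymptotically autonomous perturbation $a(n+1)=f(a(n))-2a(n)\beta(n)$ rather than a direct identification of the limit as a fixed point). One spot in your write-up is compressed: the step from ``$a(n+1)-a(n)>0$ on $\{a(n)\le a^*-\epsilon\}$'' to ``$\liminf a(n)\ge a^*-\epsilon$'' also needs (i) that $a(n)$ cannot stay below $a^*-\epsilon$ forever, since an eventually increasing bounded sequence converges to a fixed point of $f$, and $f$ has none in $(0,a^*-\epsilon]$, and (ii) that once above $a^*-\epsilon$ the sequence can drop back by at most $O(\beta(n))$ because $f(x)\ge x$ on $[0,a^*]$; both are routine, so this is a presentational gap, not a mathematical one. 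Your treatment also has the minor advantage of handling (a) and (b) uniformly as the cases $i=k$ and $i<k$.
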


Thus the asymptotic distribution of $\p(n)$ is uniquely determined by $\p$. This is expected: although
Step 1 is random, all the other steps are deterministic. In particular, if the vector $\p$ is itself random,
and if it is distributed according to a continuous distribution on $\Delta_k$, then almost surely
there will be only one state $i$ that maximizes $\p_i$; only this state will be available for the root
in the limit.

We also analyze a variant of the model when $k=2$. For a fixed $q\in(0,1)$, consider the infection process
with rules (R1), (R2), (R3) and
\begin{enumerate}[(R4)']
\item[(R4)'] if only one of the leaves is infected, then the node is infected by it with probability
$q$ and not infected with probability $1-q$.
\end{enumerate}
Let $\p(n)\in\Delta_2$ denote the distribution of the state in the root node of $\T_n$.

\begin{theorem}\label{main thm 2}
For any $\p\in\Delta_2$, $\p(n)$ converges.
\begin{enumerate}[(a)]
\item If $q>0.5$, then
\begin{align*}
\lim_{n\to\infty}\p(n)=
\left\{\begin{array}{ll}
(1,0,0)&\text{if }\p_1>\p_2,\\
&\\
(0,1,0)&\text{if }\p_1<\p_2,\\
&\\
\left(\frac{2q-1}{4q-1},\frac{2q-1}{4q-1},\frac{1}{4q-1}\right)&\text{if }\p_1=\p_2.\\
\end{array}
\right.
\end{align*}
\item If $q=0.5$, then
\begin{align*}
\lim_{n\to\infty}\p(n)=
\left\{\begin{array}{ll}
(\p_1-\p_2,0,1-\p_1+\p_2)&\text{if }\p_1>\p_2,\\
&\\
(0,-\p_1+\p_2,1+\p_1-\p_2)&\text{if }\p_1<\p_2,\\
&\\
(0,0,1)&\text{if }\p_1=\p_2.\\
\end{array}
\right.
\end{align*}
\item If $q<0.5$, then $\p(n)$ converges to $(0,0,1)$.
\end{enumerate}
\end{theorem}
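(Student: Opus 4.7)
Conditioning on the two children of the root and applying (R1)--(R3) and (R4)$'$ gives, for $\p(n)=(a_n,b_n,c_n)$, the recursion
\begin{align*}
a_{n+1}=a_n^2+2qa_nc_n,\qquad b_{n+1}=b_n^2+2qb_nc_n,\qquad c_{n+1}=1-a_{n+1}-b_{n+1},
\end{align*}
so the problem reduces to iterating a single polynomial self-map of $\Delta_2$. I would first list the fixed points: the three corners $(0,0,1),(1,0,0),(0,1,0)$, and (only when $q>1/2$) the symmetric interior point $\bigl(\tfrac{2q-1}{4q-1},\tfrac{2q-1}{4q-1},\tfrac{1}{4q-1}\bigr)$. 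Every limit asserted by the theorem is one of these.

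\noindent\textbf{Key identity.} Setting $s_n=a_n+b_n$, $d_n=a_n-b_n$ and using $c_n=1-s_n$, subtracting the two recursions gives
\begin{align*}
d_{n+1}=d_n\bigl[2q+(1-2q)s_n\bigr].
\end{align*}
The multiplier is identically $1$ for $q=1/2$, lies in $[1,2q]$ (with equality iff $c_n=0$) for $q>1/2$, and lies in $[2q,1]$ for $q<1/2$. This single trichotomy drives all three cases.

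\noindent\textbf{The three transparent cases.} For $q=1/2$ the identity forces $d_n\equiv\p_1-\p_2$, while $a_{n+1}=a_n(1-b_n)\le a_n$ (symmetrically for $b_n$) makes both sequences non-increasing and therefore convergent; taking $n\to\infty$ gives $a^*b^*=0$, and the conserved difference pins down the stated limit. For $q<1/2$ I would rewrite $a_{n+1}/a_n=1-b_n-(1-2q)c_n<1$, so $a_n,b_n$ strictly decrease from $a_0,b_0<1$, and the only compatible fixed point is $(0,0,1)$. For $q>1/2$ with $\p_1\neq\p_2$ the multiplier is $\ge 1$, so $|d_n|$ is non-decreasing and bounded by $1$, hence converges; this in turn forces the multiplier to tend to $1$, i.e.\ $c_n\to 0$, and the limit must be the edge fixed point $(1,0,0)$ or $(0,1,0)$ according to the sign of $d_0$.

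\noindent\textbf{The hard case.} For $q>1/2$ with $\p_1=\p_2$, $d_n\equiv 0$ reduces everything to the scalar map $f(a)=2qa+(1-4q)a^2$ on $[0,1/2]$ with fixed points $0$ and $a^*=\tfrac{2q-1}{4q-1}$. The multiplier at $a^*$ is $|2-2q|<1$ but degenerates to $|f'(0)|=2q>1$ at the other fixed point, so no uniform contraction estimate is available; this is the main obstacle. I would overcome it with the exact factorization
\begin{align*}
a^*-f(a)=(a^*-a)\bigl(1-(4q-1)a\bigr),
\end{align*}
which gives $e_{n+1}=e_n\bigl(1-(4q-1)a_n\bigr)$ for $e_n:=a^*-a_n$. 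Since $|1-(4q-1)a_n|<1$ for $a_n\in(0,1/2]$, $|e_n|$ strictly decreases and converges. If its limit were positive, $|1-(4q-1)a_n|$ would have to approach $1$, forcing $a_n\to 0$ (excluded by $a_{n+1}/a_n\to 2q>1$) or $a_n\to 2/(4q-1)>1/2$ (impossible). Hence $e_n\to 0$, completing the proof.
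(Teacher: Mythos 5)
Your proof is correct, and its skeleton coincides with the paper's: set up the quadratic map $G$ on $\Delta_2$, exploit the difference recursion $d_{n+1}=d_n\{1+(2q-1)c_n\}$, and identify limits as fixed points. Two places differ genuinely. First, for $q>1/2$ with $\p_1>\p_2$, after getting $c_n\to 0$ you conclude by noting that the limit (which exists since $c_n\to0$ and $d_n$ converges) must be a fixed point on the edge $\{x_3=0\}$, hence a corner; the paper instead proves the pointwise inequality $G_3(x)>G_2(x)$ whenever $x_1>x_2$ and squeezes $\p_2(n)$ between $0$ and $\p_3(n)$. Both work. Second, and more substantially, in the symmetric case $\p_1=\p_2$, $q>1/2$, the paper merely says one can argue ``similarly to Lemma \ref{lemma for quadratic function},'' i.e.\ via a monotone orbit converging to the unique positive fixed point; that argument is delicate here because $g$ does not map $(0,1/2]$ into $(0,a^*]$ (its maximum $q^2/(4q-1)$ exceeds $a^*=\tfrac{2q-1}{4q-1}$ for $q<1$, and for $q\ge 3/4$ orbits can overshoot and oscillate around $a^*$). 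Your exact factorization $a^*-g(a)=(a^*-a)\bigl(1-(4q-1)a\bigr)$, together with the observation that $\lvert 1-(4q-1)a\rvert<1$ on $(0,1/2]$ and the exclusion of $a_n\to 0$ via $a_{n+1}/a_n\to 2q>1$, handles the non-monotone regime cleanly and is a more complete argument than the paper's sketch. One small inaccuracy in your overview: the claim that every limit asserted by the theorem is a corner or the symmetric interior point fails at $q=1/2$, where the entire edges $\{x_1=0\}$ and $\{x_2=0\}$ are pointwise fixed and the asserted limits $(\p_1-\p_2,0,1-\p_1+\p_2)$ lie on them; your actual $q=1/2$ argument (conserved difference plus $a^*b^*=0$) does not rely on that claim, so nothing breaks, but the sentence should be corrected.
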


That is, for large $q$ the behavior is similar to that of Theorem \ref{main thm 1}
(the fixed point inside the simplex varies smoothly with $q$),
there is a phase transition at $q=0.5$, and if $q$ is small then the empty state
dominates. We depict the phase spaces in Figure \ref{picture 1}.

\begin{figure}[hbt!]
\centering
\def\svgwidth{12cm}
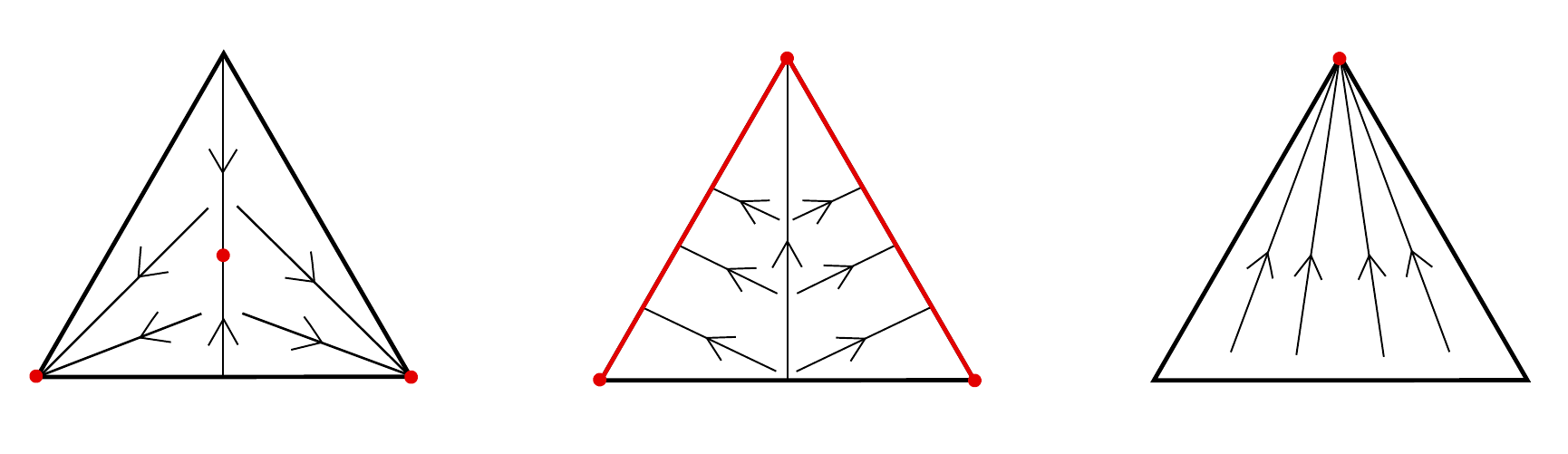
\caption{Phase spaces of $\p(n)$ in Theorem \ref{main thm 2}. The red dots and red lines
represent the possible limit behaviors of $\p(n)$. The black arrows describe the basins of attraction
of each limit behavior, e.g. if $q=0.5$ then $(0,0,1)$ attracts all points in the line
$\p_1=\p_2$.}\label{picture 1}
\end{figure}

The model analyzed here is an instance of a wider setup, in which several
types of particles move in a space and interact as follows: when particles
of the same type meet they coalesce, while when particles of different types meet
they annihilate each other. What is the distribution of the
survivor particles, if any? We collect this and other variants of the
model in Section \ref{section final comments}.

Our setup is completely deterministic: the distribution at the root node
satisfies a quadratic recursive equation, and the analysis of this dynamics
establishes the results. Similar models in trees of larger branching number
lead to recursive equations of higher degree. Not much is known about
these systems.

Recursions appear naturally in the analysis of probabilistic processes on trees.
See e.g. $\S 4.2$ of~\cite{benjamini2000random} and
\cite{dekking1991branching,dekking1991limit,mossel2003noise,pemantle2010critical}.
For a survey on more elaborate recursive equations, see~\cite{aldous2005survey}.

The paper is organized as follows. In Section \ref{section proof of thm 1}
we prove Theorem 1, in Section \ref{section proof of thm 2}
we prove Theorem 2, and in Section \ref{section final comments}
we make final comments and collect further questions.

\section{Proof of Theorem \ref{main thm 1}}\label{section proof of thm 1}

For simplicity of notation, let us assume that the possible states of the nodes on $\T_n$
are $\{1,\ldots,k,k+1\}$: $1,\ldots,k$ represent the infections and $k+1$
represents the empty state (no infection). Given $i,j\in\{1,\ldots,k+1\}$, $i\not=j$, the rules (R1)--(R4)
are depicted in Figure \ref{picture 2}.

\begin{figure}[hbt!]
\centering
\def\svgwidth{12cm}
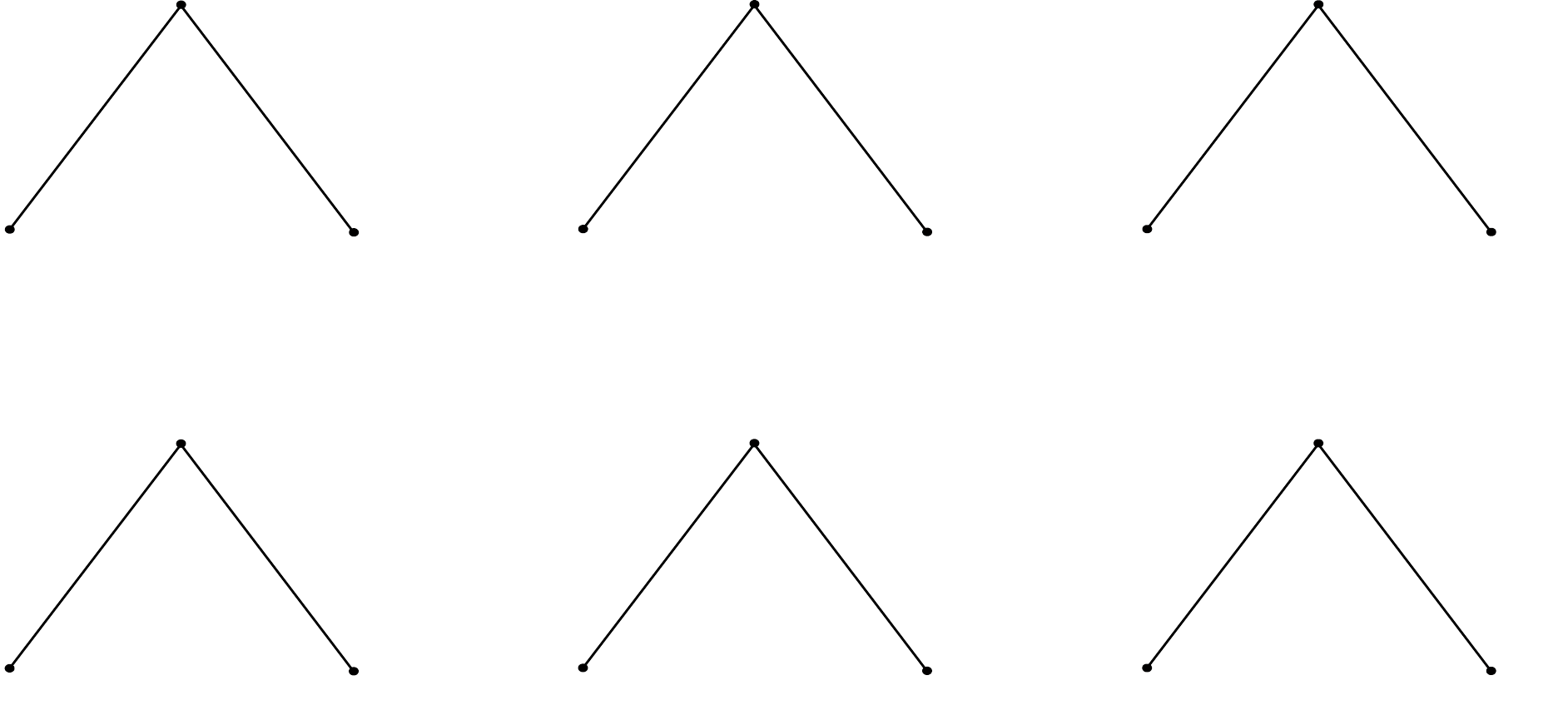
\caption{The rules of the process.}\label{picture 2}
\end{figure}

Let $R_{n+1}$ denote the root node of $\T_{n+1}$, and let $R_{n+1}^1,R_{n+1}^2$ be its two children.
$R_{n+1}^1$ and $R_{n+1}^2$ are root nodes of two independent binary trees of height $n$,
thus their states are independent and distributed according to $\p(n)$. By (R1)--(R4),
the following recursions hold:
\begin{align}\label{main recursion}
\left\{\begin{array}{rcl}
\p_i(n+1)&=&\p_i(n)^2+2\p_i(n)\p_{k+1}(n),\ \ \ i=1,\ldots,k,\text{ and}\\
&&\\
\p_{k+1}(n+1)&=&\p_{k+1}(n)^2+2\displaystyle\sum_{1\le i<j\le k}\p_i(n)\p_j(n).\\
\end{array}
\right.
\end{align}
Define the function $F=(F_1,\ldots,F_k,F_{k+1}):\Delta_k\to\Delta_k$ by
\begin{align*}
\left\{\begin{array}{rcl}
F_i(x_1,\ldots,x_{k+1})&=&x_i^2+2x_ix_{k+1},\ \ \ i=1,\ldots,k,\text{ and}\\
&&\\
F_{k+1}(x_1,\ldots,x_{k+1})&=&x_{k+1}^2+2\displaystyle\sum_{1\le i<j\le k}x_ix_j.\\
\end{array}
\right.
\end{align*}
Thus $\p(n)=F^n(\p)$ for every $n\ge 1$.\\

\noindent (a) Assume that $\p_1=\cdots=\p_k$. Clearly, $\p_1(n)=\cdots=\p_k(n)$
for every $n\ge 1$. So $\{\p_1(n)\}_{n\ge 1}$ satisfies the recursion
\begin{align*}
\p_1(n+1)=\p_1(n)^2+2\p_1(n)\{1-k\p_1(n)\}=\{1-2k\}\p_1(n)^2+2\p_1(n).
\end{align*}
Define $f:(0,k^{-1}]\to\R$ by $f(x)=(1-2k)x^2+2x$. Thus $\p_1(n)=f^n(\p_1)$ for every $n\ge 1$.

\begin{lemma}\label{lemma for quadratic function}
$\overline x=\frac{1}{2k-1}$ is a global attractor of $f$.
\end{lemma}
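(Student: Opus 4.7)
The plan is a routine one-dimensional dynamics argument that exploits the fact that the vertex of the parabola $f$ coincides with its nontrivial fixed point. First, I would solve $f(x)=x$, i.e.\ $(1-2k)x^2+x=x(1-(2k-1)x)=0$, to find that the only fixed points of $f$ in $[0,k^{-1}]$ are $0$ and $\overline x=\frac{1}{2k-1}$. Differentiating, $f'(x)=2+2(1-2k)x$, so $f'(\overline x)=0$ and $f'(0)=2$; in particular $\overline x$ is super-attracting and $0$ is repelling. Since the leading coefficient $1-2k<0$, the graph of $f$ is a downward parabola whose vertex sits at $\overline x$, so $f$ is strictly increasing on $(0,\overline x]$ with $f(\overline x)=\overline x$, and its maximum over all of $\R$ is $\overline x$.

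Next, I would establish invariance. From the previous step, $f(x)\le \overline x$ for every $x\in (0,k^{-1}]$. Moreover $f(x)=x\bigl(2+(1-2k)x\bigr)>0$ on $(0,k^{-1}]$, because $2+(1-2k)x>0$ is equivalent to $x<\frac{2}{2k-1}$, which is implied by $x\le k^{-1}$. Hence $f\bigl((0,k^{-1}]\bigr)\subseteq (0,\overline x]$, and it suffices to prove that $f^n(x)\to \overline x$ for every $x\in(0,\overline x]$.

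Now, on $(0,\overline x]$ the map $f$ is strictly increasing, and the identity
\[
f(x)-x=x\bigl(1-(2k-1)x\bigr)
\]
shows $f(x)\ge x$ with equality only at the endpoints $0$ and $\overline x$. Therefore, for any $x_0\in(0,\overline x]$, the orbit $x_n:=f^n(x_0)$ is monotone nondecreasing, stays in $(x_0,\overline x]$, and so converges to some limit $\ell\in[x_0,\overline x]$. By continuity, $\ell$ is a fixed point of $f$, and since $\ell\ge x_0>0$, we must have $\ell=\overline x$. Combined with the previous step, this gives $f^n(x)\to\overline x$ for all $x\in(0,k^{-1}]$, proving that $\overline x$ is a global attractor.

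There is no real obstacle here: the only mildly delicate check is verifying that $f$ maps $(0,k^{-1}]$ into $(0,\overline x]$, and this is immediate from the coincidence of the vertex with $\overline x$.
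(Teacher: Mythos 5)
Your proposal is correct and follows essentially the same route as the paper: identify $\overline x$ as the unique positive fixed point, check that $f$ maps $(0,k^{-1}]$ into $(0,\overline x]$ (which you justify cleanly via the observation that the vertex of the parabola sits exactly at $\overline x$), and then use monotonicity of the orbit on $(0,\overline x]$ to conclude convergence to $\overline x$. The only difference is that you fill in the computational details the paper leaves to the figure.
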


\begin{proof}
Note that (see Figure \ref{picture 3})
\begin{enumerate}[$\bullet$]
\item $\overline x$ is the unique fixed point of $f$,
\item $f(0,k^{-1}]\subset (0,\overline x]$, and
\item $f|_{(0,\overline x)}:(0,\overline x)\to(0,\overline x)$ is strictly increasing.
\end{enumerate}

\begin{figure}[hbt!]
\centering
\def\svgwidth{4cm}
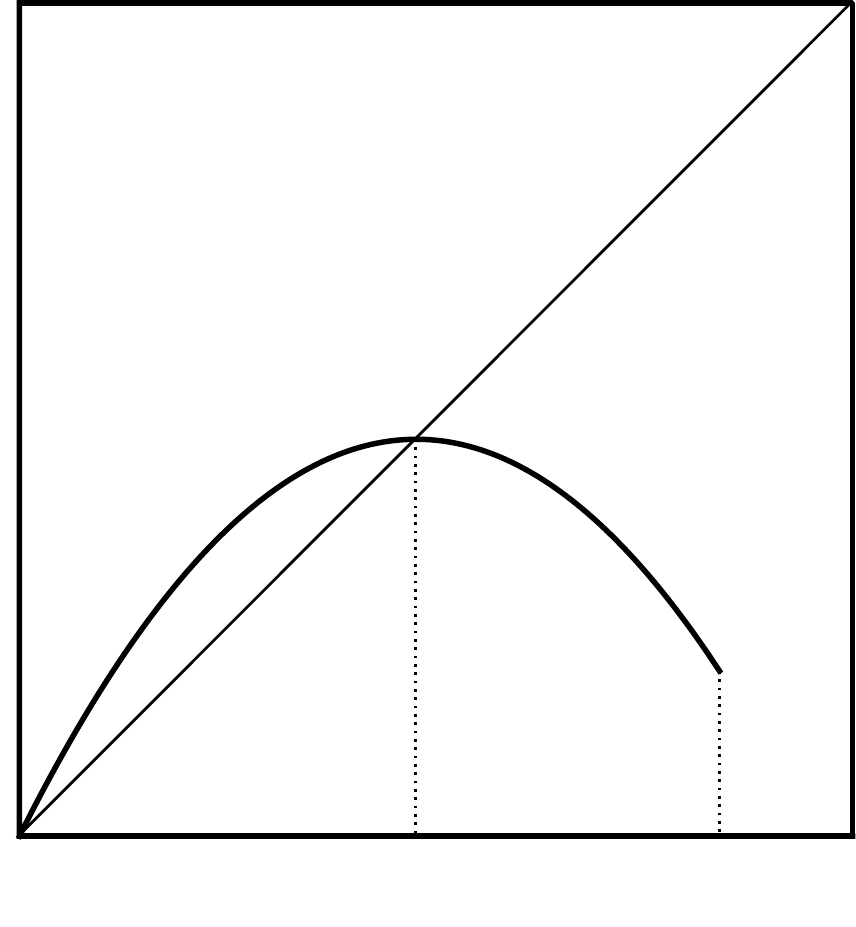
\caption{Graph of $f$.}\label{picture 3}
\end{figure}

For every $x\in (0,k^{-1}]$, the sequence $\{f^n(x)\}_{n\ge 1}$ is increasing and converges to the unique
fixed point $\overline x$ of $f$.
\end{proof}

By Lemma \ref{lemma for quadratic function}, $\p(n)$ converges to
$(\frac{1}{2k-1},\ldots,\frac{1}{2k-1},\frac{k-1}{2k-1})$.\\

\noindent (b) Assume that $\p_1=\cdots=\p_i>\p_{i+1}\ge\cdots\ge\p_k$ for some $i\in\{1,2,\ldots,k-1\}$.
Clearly $\p_1(n)=\cdots=\p_i(n)>\p_{i+1}(n)\ge\cdots\ge\p_k(n)$ for every $n\ge 1$. We will prove that
\begin{align}\label{limit convergence for n colors}
\lim_{n\to\infty}F^n(\p)=
\large\bigg(\underbrace{\dfrac{1}{2i-1},\ldots,\dfrac{1}{2i-1}}_{i},\underbrace{0,\ldots,0}_{k-i},\dfrac{i-1}{2i-1}\large\bigg).
\end{align}
The proof is divided into a few lemmas.

\begin{lemma}\label{lemma E reinforces}
If $x\in\Delta_k$ with $x_1=\displaystyle\max_{1\le j\le k}x_j$, then $F_{k+1}(x)\ge F_2(x)+\cdots+F_k(x)$.
\end{lemma}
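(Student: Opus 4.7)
The plan is to just unpack both sides and reduce the inequality to something that is manifestly true once we use the hypothesis $x_1 \ge x_j$ for all $j \ge 2$. Write $S = x_2 + \cdots + x_k$ (so $x_1 + S + x_{k+1} = 1$); the idea is to express the cross-term $2\sum_{1\le i<j\le k} x_i x_j$ appearing in $F_{k+1}$ in a form that cancels most of what appears in $F_2 + \cdots + F_k$.

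First I would split off the $i=1$ terms in the double sum inside $F_{k+1}$:
\begin{align*}
2\sum_{1\le i<j\le k} x_i x_j = 2x_1 S + 2\sum_{2\le i<j\le k} x_i x_j,
\end{align*}
and then use the identity $S^2 = \sum_{j=2}^{k} x_j^2 + 2\sum_{2\le i<j\le k} x_i x_j$ to rewrite the last sum. Expanding $F_2(x)+\cdots+F_k(x) = \sum_{j=2}^{k} x_j^2 + 2x_{k+1} S$ and substituting, the desired inequality $F_{k+1}(x) \ge F_2(x) + \cdots + F_k(x)$ becomes
\begin{align*}
(x_{k+1} - S)^2 + 2 x_1 S \ge 2\sum_{j=2}^{k} x_j^2
\end{align*}
after collecting $x_{k+1}^2 - 2 x_{k+1} S + S^2$ into a perfect square. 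This is the whole content of the lemma in one line.

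Once the inequality is in this form, the hypothesis does the rest: since $x_1 = \max_{1\le j\le k} x_j$, we have $x_1 x_j \ge x_j^2$ for every $j\in\{2,\ldots,k\}$, so summing gives $x_1 S \ge \sum_{j=2}^{k} x_j^2$, hence $2 x_1 S \ge 2\sum_{j=2}^{k} x_j^2$, and the $(x_{k+1}-S)^2$ term is nonnegative.

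There is no real obstacle here; the only place care is required is in the bookkeeping of the $j=1$ versus $j\ge 2$ indices, because the statement only asks for $F_{k+1}\ge F_2+\cdots+F_k$ (omitting $F_1$), and this asymmetry is precisely what allows the term $2x_1 S$ to appear and absorb $2\sum_{j=2}^{k} x_j^2$. So the proof reduces to one algebraic rearrangement plus the elementary bound $x_j^2\le x_1 x_j$.
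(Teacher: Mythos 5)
Your proposal is correct and follows essentially the same route as the paper: both reduce the difference $F_{k+1}(x)-F_2(x)-\cdots-F_k(x)$ to the perfect square $\left(x_{k+1}-\sum_{j=2}^k x_j\right)^2$ plus the term $2\sum_{j=2}^k(x_1-x_j)x_j$, which is exactly your $2x_1S-2\sum_{j=2}^k x_j^2$, and both conclude from $x_1\ge x_j$. The bookkeeping is right, so nothing further is needed.
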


\begin{proof}
\begin{eqnarray*}
F_{k+1}(x)-F_2(x)-\cdots-F_k(x)&=&x_{k+1}^2+2\sum_{1\le i<j\le k}x_ix_j-\sum_{i=2}^k(x_i^2+2x_ix_{k+1})\\
&=&x_{k+1}^2-2x_{k+1}\sum_{i=2}^k x_i+2\sum_{1\le i<j\le k}x_ix_j-\sum_{i=2}^k x_i^2\\
&=&\left(x_{k+1}-\sum_{i=2}^k x_i\right)^2-\left(\sum_{i=2}^k x_i\right)^2\\
&&+2\sum_{1\le i<j\le k}x_ix_j-\sum_{i=2}^k x_i^2\\
&=&\left(x_{k+1}-\sum_{i=2}^k x_i\right)^2-\sum_{i=2}^k x_i^2-2\sum_{2\le i<j\le k}x_ix_j\\
&&+2\sum_{1\le i<j\le k}x_ix_j-\sum_{i=2}^k x_i^2\\
&=&\left(x_{k+1}-\sum_{i=2}^k x_i\right)^2+2\sum_{i=2}^k x_1x_i-2\sum_{i=2}^k x_i^2\\
&=&\left(x_{k+1}-\sum_{i=2}^k x_i\right)^2+2\sum_{i=2}^k (x_1-x_i)x_i
\end{eqnarray*}
and every term in the expression above is nonnegative.
\end{proof}

\begin{lemma}\label{lemma auxiliar 1}
$\{\p_1(n)-\p_{i+1}(n)\}_{n\ge 1}$ converges.
\end{lemma}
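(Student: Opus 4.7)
My strategy is to convert the claim into a statement about a one-dimensional multiplicative recursion for $d(n):=\p_1(n)-\p_{i+1}(n)$, and then to invoke Lemma~\ref{lemma E reinforces} to show that the multiplier of this recursion is eventually $\ge 1$, so that $\{d(n)\}$ is eventually monotone and trivially bounded in $[0,1]$.

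First I would exploit the factored form $\p_j(n+1)=\p_j(n)\bigl(\p_j(n)+2\p_{k+1}(n)\bigr)$ of the first $k$ lines of (\ref{main recursion}) for $1\le j\le k$. Subtracting the equations for $j=1$ and $j=i+1$ and factoring the resulting difference of squares yields the clean recursion
\begin{align*}
d(n+1) \;=\; d(n)\cdot\bigl(\p_1(n)+\p_{i+1}(n)+2\p_{k+1}(n)\bigr),
\end{align*}
so, since $d(n)\ge 0$, everything reduces to bounding the multiplier $\p_1(n)+\p_{i+1}(n)+2\p_{k+1}(n)$ from below by $1$ for $n$ large enough.

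This bound is precisely what Lemma~\ref{lemma E reinforces} delivers. Because the ordering $\p_1(n)=\cdots=\p_i(n)>\p_{i+1}(n)\ge\cdots\ge\p_k(n)$ is preserved under $F$ (as noted just before (\ref{limit convergence for n colors})), $\p_1(n-1)$ is the maximum coordinate among the first $k$ for every $n\ge 2$; applying Lemma~\ref{lemma E reinforces} at time $n-1$ therefore gives $\p_{k+1}(n)\ge \p_2(n)+\cdots+\p_k(n)$. Combining this with $\sum_{j=1}^{k+1}\p_j(n)=1$:
\begin{align*}
\p_1(n)+\p_{i+1}(n)+2\p_{k+1}(n) \;\ge\; \p_1(n)+\p_{i+1}(n)+\p_{k+1}(n)+\sum_{j=2}^{k}\p_j(n) \;=\; 1+\p_{i+1}(n) \;\ge\; 1.
\end{align*}

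Hence $d(n+1)\ge d(n)$ for every $n\ge 2$; since the sequence lies in $[0,1]$, the monotone convergence theorem closes the argument. The main (and really the only) obstacle is to see the exact coupling between the $d$-recursion and Lemma~\ref{lemma E reinforces}: once one notices that the lower bound $\p_{k+1}\ge\p_2+\cdots+\p_k$ is exactly what turns the multiplier into $1+\p_{i+1}(n)$, the calculation is a single line.
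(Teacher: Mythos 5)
Your proof is correct and follows essentially the same route as the paper's: the identical factorization $d(n+1)=d(n)\{\p_1(n)+\p_{i+1}(n)+2\p_{k+1}(n)\}$, followed by Lemma~\ref{lemma E reinforces} to push the multiplier to at least $1$, and then monotone boundedness. The paper bounds the multiplier slightly differently (dropping $\p_{i+1}(n)$ first and using $2\p_{k+1}(n)\ge\p_{k+1}(n)+\sum_{j=2}^k\p_j(n)$ to get strictly greater than $1$), but this is the same idea.
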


\begin{proof}
We have
\begin{eqnarray*}
\p_1(n+1)-\p_{i+1}(n+1)&=&\left\{\p_1(n)^2+2\p_1(n)\p_{k+1}(n)\right\}-\\
&&\left\{\p_{i+1}(n)^2+2\p_{i+1}(n)\p_{k+1}(n)\right\}\\
&=&\{\p_1(n)-\p_{i+1}(n)\}\{\p_1(n)+\p_{i+1}(n)+2\p_{k+1}(n)\},
\end{eqnarray*}
that is:
\begin{equation}\label{equation 1}
\p_1(n+1)-\p_{i+1}(n+1)=\{\p_1(n)-\p_{i+1}(n)\}\{\p_1(n)+\p_{i+1}(n)+2\p_{k+1}(n)\}.
\end{equation}
By Lemma \ref{lemma E reinforces},
\begin{eqnarray*}
\p_1(n)+\p_{i+1}(n)+2\p_{k+1}(n)&>&\p_1(n)+2\p_{k+1}(n)\\
&\ge&\p_1(n)+\cdots+\p_{k+1}(n)\\
&=&1,
\end{eqnarray*}
thus $\{\p_1(n)-\p_{i+1}(n)\}_{n\ge 1}$ is a bounded and strictly increasing sequence.
\end{proof}

\begin{lemma}\label{lemma auxiliar 3}
$\{\p_{i+1}(n)\}_{n\ge 1},\ldots,\{\p_k(n)\}_{n\ge 1}$ all converge to zero.
\end{lemma}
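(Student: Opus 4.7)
The plan is to extract the desired convergence from the identity \eqref{equation 1} together with Lemma \ref{lemma auxiliar 1}. Write $L=\lim_{n\to\infty}\{\p_1(n)-\p_{i+1}(n)\}$. First I would argue that $L>0$: the sequence is strictly increasing by the proof of Lemma \ref{lemma auxiliar 1}, and $\p_1(1)-\p_{i+1}(1)>0$ because the strict ordering $\p_1(n)>\p_{i+1}(n)$ is preserved by $F$ (indeed \eqref{equation 1} itself says the difference at time $n+1$ equals the difference at time $n$ times a positive factor).

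Next, I rewrite \eqref{equation 1} as
\begin{align*}
\p_1(n)+\p_{i+1}(n)+2\p_{k+1}(n)=\frac{\p_1(n+1)-\p_{i+1}(n+1)}{\p_1(n)-\p_{i+1}(n)}.
\end{align*}
Since numerator and denominator both tend to the strictly positive limit $L$, the left-hand side converges to $1$. On the other hand, Lemma \ref{lemma E reinforces} applied to $\p(n)$ (whose largest coordinate is $\p_1(n)$) gives $\p_{k+1}(n)\ge\p_2(n)+\cdots+\p_k(n)$, so $\p_1(n)+2\p_{k+1}(n)\ge\p_1(n)+\p_2(n)+\cdots+\p_{k+1}(n)=1$, and therefore
\begin{align*}
\p_1(n)+\p_{i+1}(n)+2\p_{k+1}(n)\ge 1+\p_{i+1}(n).
\end{align*}
Passing to the limit on both sides forces $\p_{i+1}(n)\to 0$.

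Finally, I would note that the ordering $\p_{i+1}(n)\ge\p_{i+2}(n)\ge\cdots\ge\p_k(n)$ is propagated by $F$: the same factorization as in the proof of Lemma \ref{lemma auxiliar 1} yields $\p_j(n+1)-\p_l(n+1)=\{\p_j(n)-\p_l(n)\}\{\p_j(n)+\p_l(n)+2\p_{k+1}(n)\}$, which has the same sign as $\p_j(n)-\p_l(n)$. Consequently $0\le\p_j(n)\le\p_{i+1}(n)$ for every $j\in\{i+1,\ldots,k\}$ and every $n\ge 1$, and the squeeze theorem finishes the lemma.

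I do not anticipate a serious obstacle. The only delicate point is establishing $L>0$, because without it one could not conclude that the ratio of consecutive differences tends to $1$; this is handled by the monotonicity already observed in Lemma \ref{lemma auxiliar 1} together with the fact that the strict inequality $\p_1>\p_{i+1}$ is inherited by every iterate.
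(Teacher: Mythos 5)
Your proof is correct and follows essentially the same route as the paper: both deduce from \eqref{equation 1} that the factor $\p_1(n)+\p_{i+1}(n)+2\p_{k+1}(n)$ must tend to $1$ (the paper says "otherwise the increasing bounded sequence would be unbounded," you phrase it as a ratio of consecutive terms of a sequence converging to a positive limit $L$ — the same fact), and both then use Lemma \ref{lemma E reinforces} to bound that factor below by $1+\p_{i+1}(n)$ and squeeze. The only cosmetic difference is that the paper introduces the auxiliary quantities $y(n)$ and $z(n)$ and writes $\p_{i+1}(n)=y(n)-z(n)$, whereas you fold that into a single inequality.
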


\begin{proof}
Because $\p_{i+1}(n)\ge\cdots\ge\p_k(n)$ for every $n\ge 1$, it is enough to prove that
$\{\p_{i+1}(n)\}_{n\ge 1}$ converges to zero.

By equality (\ref{equation 1}), $\{\p_1(n)+\p_{i+1}(n)+2\p_{k+1}(n)\}_{n\ge 1}$
converges to 1: otherwise, $\{\p_1(n)-\p_{i+1}(n)\}_{n\ge 1}$ would be unbounded.
Writing
\begin{align*}
\p_1(n)+\p_{i+1}(n)+2\p_{k+1}(n)=1+\left(\p_{k+1}(n)-\sum_{j=2\atop{j\neq i+1}}^k\p_j(n)\right)=:1+y(n),
\end{align*}
this means that $\{y(n)\}_{n\ge 1}$ converges to zero.
Now write
\begin{align}\label{definition z}
z(n)=\p_{k+1}(n)-\sum_{j=2}^k\p_j(n).
\end{align}
By Lemma \ref{lemma E reinforces}, $y(n)\ge z(n)\ge 0$, so $\{z(n)\}_{n\ge 1}$ also converges to zero.
Consequently, the difference $\{\p_{i+1}(n)=y(n)-z(n)\}_{n\ge 1}$ converges to zero.
\end{proof}

The proofs of Lemmas \ref{lemma auxiliar 1} and \ref{lemma auxiliar 3} imply that
\begin{align}\label{equation 3}
\lim_{n\to\infty}F^n(\p)=\large(\underbrace{x,\ldots,x}_{i},\underbrace{0,\ldots,0}_{k-i},1-ix\large).
\end{align}
By continuity, $x\in (0,i^{-1}]$ is a fixed point of $f=f_i$ as defined in Lemma \ref{lemma for quadratic function},
i.e. $x=\frac{1}{2i-1}$. This concludes the proof of Theorem \ref{main thm 1}.

\section{A variant of the model when $k=2$: proof of Theorem \ref{main thm 2}}\label{section proof of thm 2}

Here, we fix a parameter $q\in(0,1)$ and consider the infection process with rules (R1)--(R3) and (R4)'.
Like in the previous section, we denote the empty state by 3. The rules are depicted in Figure \ref{picture 4}.

\begin{figure}[hbt!]
\centering
\def\svgwidth{12cm}
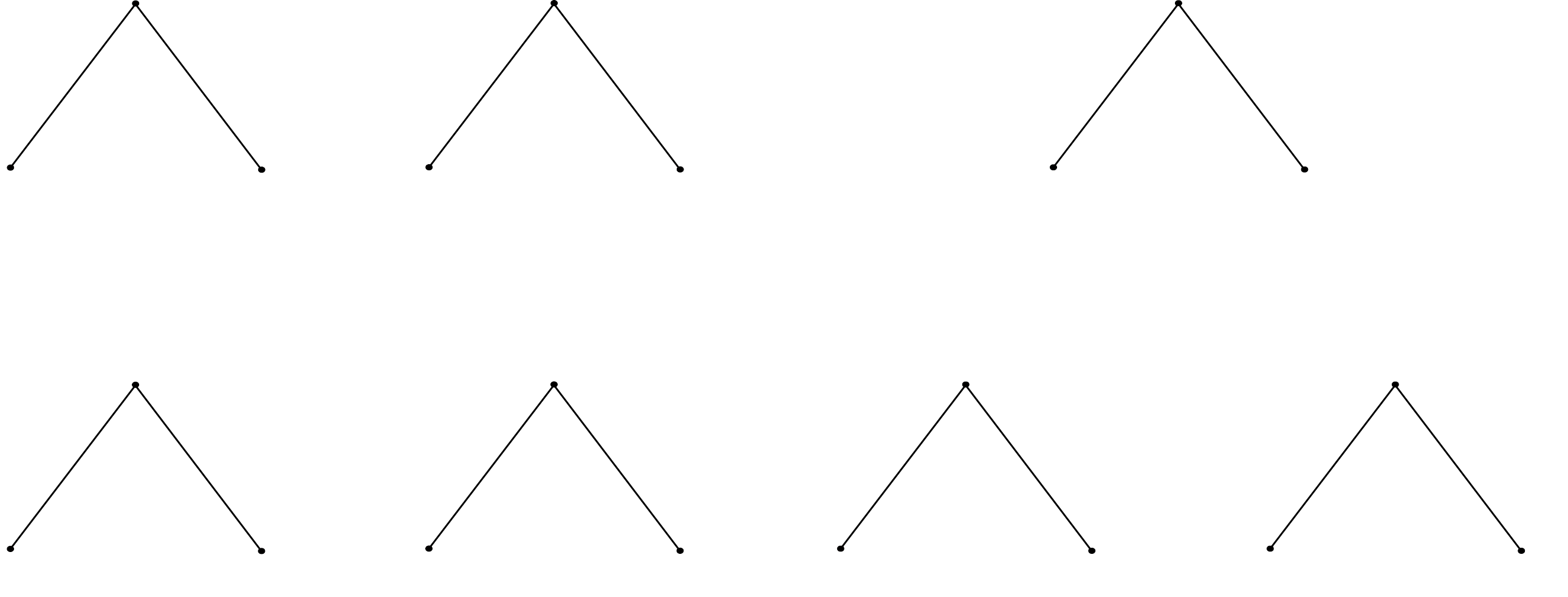
\caption{The rules of the variant process.}\label{picture 4}
\end{figure}

Although in each step the output is random, the distribution of the state in the
root node is again driven by a dynamical system.

Let $\p\in\Delta_2$, and let $\p(n)\in\Delta_2$ denote the distribution of the state in
the root node of $\T_n$. Let $G=(G_1,G_2,G_3):\Delta_2\rightarrow\Delta_2$ be
\begin{align*}
\left\{\begin{array}{rcl}
G_1(x_1,x_2,x_3)&=&x_1^2+2qx_1x_3\\
&&\\
G_2(x_1,x_2,x_3)&=&x_2^2+2qx_2x_3\\
&&\\
G_3(x_1,x_2,x_3)&=&x_3^2+2x_1x_2+2(1-q)x_1x_3+2(1-q)x_2x_3.\\
\end{array}
\right.
\end{align*}
Thus $\p(n)=G^n(\p)$ for every $n\ge 1$.\\

\noindent (a) $q>0.5$: assume first that $\p_1>\p_2$. Thus $\p_1(n)>\p_2(n)$ for every $n\ge 1$.

Firstly, we claim that $\{\p_3(n)\}_{n\ge 1}$ converges to zero. To see this, note that
\begin{eqnarray*}
\p_1(n+1)-\p_2(n+1)&=&\{\p_1(n)-\p_2(n)\}\{\p_1(n)+\p_2(n)+2q\p_3(n)\}\\
&=&\{\p_1(n)-\p_2(n)\}\{1+(2q-1)\p_3(n)\}.
\end{eqnarray*}
Thus $\{\p_1(n)-\p_2(n)\}_{n\ge 1}$ is strictly increasing. Because it is also bounded,
it follows that $\{\p_3(n)\}_{n\ge 1}$ converges to zero.

Now we claim that $\{\p_2(n)\}_{n\ge 1}$ also converges to zero. Just observe that
if $x\in\Delta_2$ with $x_1>x_2$, then $G_3(x)>G_2(x)$:
\begin{align*}
G_3(x)-G_2(x)>(x_3^2+2x_1x_2)-(x_2^2+2x_2x_3)=(x_2-x_3)^2+2(x_1-x_2)x_2>0.
\end{align*}

Thus $\{\p(n)\}_{n\ge 1}$ converges to $(1,0,0)$. Analogously, if $\p_1<\p_2$ then
$\{\p(n)\}_{n\ge 1}$ converges to $(0,1,0)$.

Now assume that $\p_1=\p_2$. In this case, $\{\p_1(n)\}_{n\ge 1}$ satisfies the recursion
\begin{align*}
\p_1(n+1)=\p_1(n)^2+2q\p_1(n)\{1-2\p_1(n)\}
=(1-4q)\p_1(n)^2+2q\p_1(n).
\end{align*}
Define $g:(0,0.5]\to\R$ by $g(x)=(1-4q)x^2+2qx$. Thus $\p_1(n)=g^n(\p_1)$ for every $n\ge 1$.

\begin{figure}[hbt!]
\centering
\def\svgwidth{4cm}
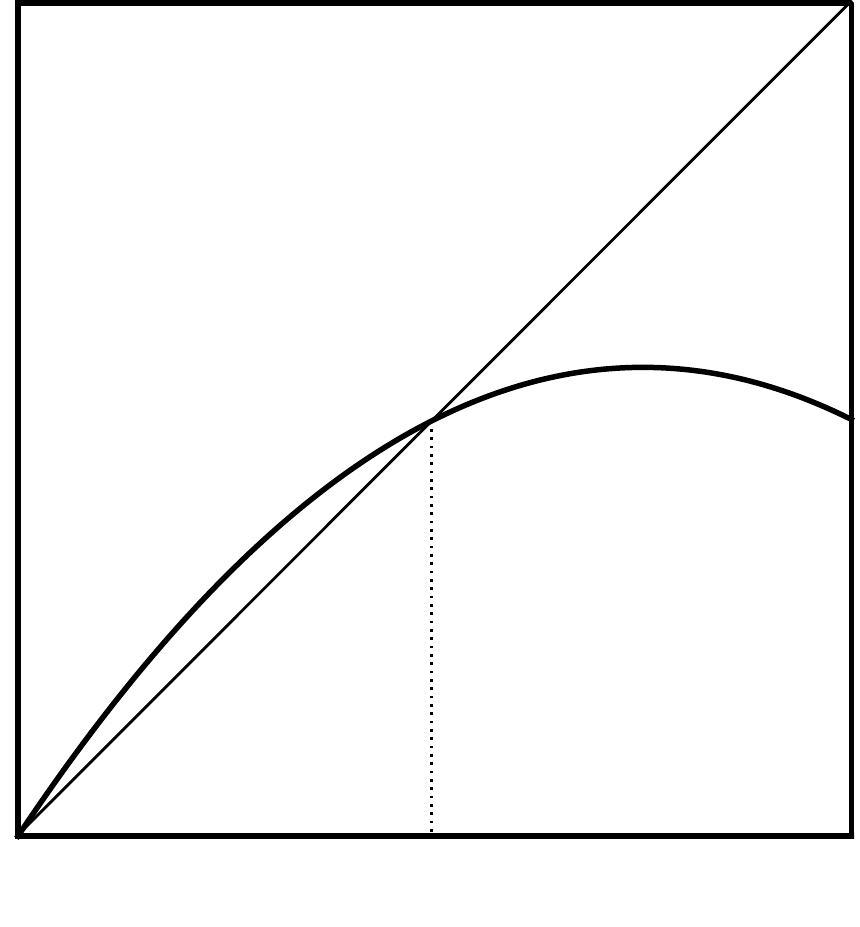
\caption{The graph of $g$.}\label{picture 5}
\end{figure}

Arguing similarly to the proof of Lemma \ref{lemma for quadratic function}, $g$ has a global
attracting fixed point $\frac{2q-1}{4q-1}$ (see Figure \ref{picture 5}), so $\{\p(n)\}_{n\ge 1}$
converges to $\left(\frac{2q-1}{4q-1},\frac{2q-1}{4q-1},\frac{1}{4q-1}\right)$.\\

\noindent (c) $q<0.5$: we have
\begin{align*}
\p_1(n+1)=\p_1(n)\{\p_1(n)+2q\p_3(n)\}<\p_1(n)\{\p_1(n)+\p_3(n)\}<\p_1(n)
\end{align*}
so $\{\p_1(n)\}_{n\ge 1}$ is strictly decreasing. Analogously, $\{\p_2(n)\}_{n\ge 1}$
is strictly decreasing. In particular, these two sequences converge, and so does
$\{\p(n)\}_{n\ge 1}$. By continuity, its limit
$\overline\p=(\overline\p_1,\overline\p_2,\overline\p_3)\in\overline\Delta_2$
satisfies
\begin{align*}
\left\{\begin{array}{rcl}
\overline\p_1&=&\overline\p_1(\overline\p_1+2q\overline\p_3)\\
&&\\
\overline\p_2&=&\overline\p_2(\overline\p_2+2q\overline\p_3).
\end{array}
\right.
\end{align*}
If $\overline\p_1\not=0$, then $\overline\p_1+2q\overline\p_3=1$, so
\begin{align*}
1=\overline\p_1+2q\overline\p_3\le\overline\p_1+\overline\p_3\le\overline\p_1+\overline\p_2+\overline\p_3=1,
\end{align*}
with equality only if $\overline\p_1=1$. This cannot happen, because $\overline\p_1$ is the limit of
a strictly decreasing sequence. Thus $\overline\p_1=0$, and analogously $\overline\p_2=0$, so
$\{\p(n)\}_{n\ge 1}$ converges to $(0,0,1)$.\\

\noindent (b) $q=0.5$: as in (c), $\{\p_1(n)\}_{n\ge 1}$ and $\{\p_2(n)\}_{n\ge 1}$
are strictly decreasing, so $\{\p(n)\}_{n\ge 1}$ converges to some
$\overline\p=(\overline\p_1,\overline\p_2,\overline\p_3)\in\overline\Delta_2$
satisfying
\begin{align*}
\left\{\begin{array}{rcl}
\overline\p_1&=&\overline\p_1(\overline\p_1+\overline\p_3)\\
&&\\
\overline\p_2&=&\overline\p_2(\overline\p_2+\overline\p_3).
\end{array}
\right.
\end{align*}
By the first equality,
\begin{align}\label{equation 2}
\overline\p_1\overline\p_2=\overline\p_1(1-\overline\p_1-\overline\p_3)=0.
\end{align}
Now, note that
\begin{eqnarray*}
\p_1(n+1)-\p_2(n+1)&=&\{\p_1(n)-\p_2(n)\}\{\p_1(n)+\p_2(n)+\p_3(n)\}\\
&=&\p_1(n)-\p_2(n),
\end{eqnarray*}
so $\p_1(n)-\p_2(n)=\p_1-\p_2$ for every $n\ge 1$. Passing to the limit,
$\overline\p_1-\overline\p_2=\p_1-\p_2$. This, together with equality
(\ref{equation 2}), implies that
\begin{align*}
\overline\p=
\left\{\begin{array}{ll}
(\p_1-\p_2,0,1-\p_1+\p_2)&\text{if }\p_1>\p_2,\\
&\\
(0,-\p_1+\p_2,1+\p_1-\p_2)&\text{if }\p_1<\p_2.\\
\end{array}
\right.
\end{align*}
If $\p_1=\p_2$, then
\begin{align*}
\overline\p_1=\overline\p_1(\overline\p_1+\overline\p_3)=\overline\p_1(1-\overline\p_1),
\end{align*}
i.e. $\overline\p_1$ is a fixed point of the map $x\mapsto x(1-x)$. This implies that
$\overline\p=(0,0,1)$, and the proof of Theorem \ref{main thm 2} is complete.

\section{Final comments}\label{section final comments}

\noindent {\bf 1.} As remarked in the introduction, we can consider a wider class of models.
Assume that particles, placed in the vertices of a graph, perform simple
random walks independently, and they annihilate/coalesce when they meet,
depending whether their states are different or not. What is the distribution of the
survivor particles, if any?\\

Using renormalization arguments, the discrete model analyzed here might help the study of related
continuous models. Here is one:
on the circle $\R/\Z$ place $2n+1$ particles uniformly and independently. Each particle moves
on $\R/\Z$ with a random independent
speed, distributed according to the gaussian $\mathcal N(0,1)$.
All particles move simultaneously and when two of them collide they annihilate each other.
What is the speed of the remaining particle? Does it converge to zero as $n$ grows,
or is there a nontrivial limiting distribution?  Another variant is to allow the particles
to perform an independent Brownian motion, each of them with an independent gaussian
diffusion constant.\\

Variants of our model can also be considered. Here we mention three of them.
The first is to change the rules of annihilation/coalescence. Assume there are $k$ possible
states $1,2,\ldots,k$ and we are given a matrix $A=(a_{ij})_{1\le i,j\le k}$ with
$a_{ij}\in\{1,2,\ldots,k\}$. At each step, the particles perform simple random walks
independently and interact according to $A$: when a particle with state $i$ meets a
particle with state $j$, they become a single particle with state $a_{ij}$.\\

The second variant is to consider annihilation/coalescence in a regular
tree of degree $d$, $d\ge 3$, with rules similar to ours: nodes whose all $d$ children have the
same state (infected or not) are infected (or not) by it; nodes with two children with different
states are not infected; nodes whose some children are infected by a single state and the others
are not infected are infected by it.\\

The third variant is to start with other measures on the initial configuration
of the leafs rather than the product measure.\\

\noindent {\bf 2.} Sensitivity of iterated majority with random inputs was studied
in~\cite{mossel2003noise}. It is of interest to consider noise sensitivity and the influence of
leaves subsets in this context, including all the variants, as well.\\

\noindent {\bf 3.} There is no uniform rate of convergence in Theorems \ref{main thm 1}
and \ref{main thm 2}. As an illustration, we prove this for Theorem \ref{main thm 1}.
Let $z(n)$ as in (\ref{definition z}). By Lemma \ref{lemma auxiliar 3}, $z(n)$ converges
to zero. This convergence can be arbitrarily slow: by Lemma \ref{lemma E reinforces},
\begin{align*}
z(n+1)=z(n)^2+2\sum_{i=2}^k(\p_1(n)-\p_i(n))\p_i(n)\ge z(n)^2,
\end{align*}
thus $z(n)\ge z(0)^{2^n}$. In particular, if $z(0)=2^{-2^{-n}}$, then
$z(n)=0.5$.\\

\noindent {\bf 4.} We would like to point out that the quadratic family
appears in the proof of Theorem \ref{main thm 2}, but in a simple way: for each parameter
there is a global attracting fixed point. It would be interesting to describe a set of rules
for the model to force the quadratic family to appear with nontrivial parameters, e.g. parameters
with more periodic points, or even parameters with a horseshoe. See~\cite{brin2002introduction}
for a discussion on the quadratic family.

\section{Acknowledgements}

The authors are thankful to Cyrille Lucas for useful discussions, and to Matheus Secco
for carefully reading the preliminary version of this work.
I.B. is the incumbent of the Renee and Jay Weiss Professorial Chair.
During the preparation of this manuscript, Y.L. was a Postdoctoral Fellow at the
Weizmann Institute of Science, supported by the ERC, grant 239885.
Y.L. is supported by the Brin Fellowship.

\bibliography{binary_tree_bib}

\end{document}